\theoremstyle{plain}
\newtheorem{algorithm}{Algorithm}[section]
\newtheorem{thm}{Thm}
\newtheorem{lemma}[algorithm]{Lemma}
\newtheorem{theoremlet}[thm]{Theorem}
\newtheorem{remark-cath}{Remark}
\newtheorem{proposition}[algorithm]{Proposition}
\newtheorem{remark}[algorithm]{Remark}
\title{Regularization via Cheeger Deformations}
\author{Catherine Searle}
\address{Mathematics, Statistics, and Physics Department\\
1845 Fairmount Street\\
Wichita State University\\
Wichita, Kansas 67260}
\email{searle@math.wichita.edu}
\urladdr{https://sites.google.com/site/catherinesearle1/home}
\author{Pedro Sol\'{o}rzano}\thanks{The second author is supported by IMPA and CAPES}
\address{Departamento de Matem\'{a}tica\\
Universidade Federal de Santa Catarina\\
Campus Universit\'{a}rio Trindade\\
CEP 88.040-900 Florian\'{o}polis-SC, Brasil}
\email{solorzano@member.ams.edu}
\urladdr{http://pedrosolorzano.wix.com/home}
\author{Frederick Wilhelm}
\address{Department of Mathematics\\
University of California\\
Riverside, CA 92521}
\email{fred@math.ucr.edu}
\urladdr{http://mathdept.ucr.edu/faculty/wilhelm.html}
\date{February 18, 2015}
\subjclass{53C20}
\keywords{Cheeger Deformation, Riemannian submersion, normal homogenous,
totally geodesic fibers}
\begin{document}
\begin{abstract}
We show that Cheeger deformations regularize $G$--invariant metrics in a
very strong sense.
\end{abstract}

\maketitle

In the presence of a group of isometries $G,$ Cheeger developed a method for
perturbing the metric on a non-negatively curved manifold $M$ \cite{Cheeg}.
We will show, in the curvature free setting, that this method regularizes
the metric in a very strong sense. Before stating our result we recall the
definition of a Cheeger deformation.

Let $G$ be a compact group of isometries of $\left( M,g_{M}\right) .$ Let $%
g_{\mathrm{bi}}$ be a bi-invariant metric on $G,$ and consider the one
parameter family $l^{2}g_{\mathrm{bi}}+g_{M}$ of metrics on $G\times M.$ $G$
acts on $\left( G\times M,l^{2}g_{\mathrm{bi}}+g_{M}\right) $ via 
\begin{equation*}
g(p,m)=(pg^{-1},\text{ }gm),
\end{equation*}%
which we will call the Cheeger Action.

Modding out by the Cheeger Action we obtain a one parameter family $g_{l}$
of metrics on $M\cong \left( G\times M\right) /G.$ As $l\rightarrow \infty ,$
$\left( M,g_{l}\right) $ converges to $g_{M}$ \cite{PetWilh1}.

The quotient map for the Cheeger Action is 
\begin{equation*}
q:(p,m)\mapsto pm.
\end{equation*}%
For any point $x$ in the union of the principal orbits, $M^{\text{reg}},$ we
define

\begin{equation*}
\tilde{g}_{l}\equiv \frac{1}{l^{2}}g_{l}|_{T_{x}G\left( x\right)
}+g_{l}|_{T_{x}G\left( x\right) ^{\perp }},
\end{equation*}%
where $T_{x}G\left( x\right) $ is the tangent space to the orbit through $x,$
and $TG\left( x\right) ^{\perp }$ is its orthogonal complement.

\begin{theoremlet}
\label{vert rescale}Let $\left( M,g_{M}\right) $ be a complete, Riemannian $%
G $--manifold with $G$ a compact Lie group. For any non-negative integer $p%
\mathbb{\ }$and any $G$--invariant, pre-compact open subset $\mathcal{U}%
\subset M^{\text{reg}},$ as $l\rightarrow 0$ the one parameter family $%
\left\{ \tilde{g}_{l}|_{\mathcal{U}}\right\} _{l>0}$ converges in the $C^{p}$%
--topology to a $G$--invariant metric $\tilde{g}$ so that the Riemannian
submersion $\left( \mathcal{U},\tilde{g}\right) \longrightarrow \mathcal{U}%
/G $ has totally geodesic, normal homogeneous fibers.
\end{theoremlet}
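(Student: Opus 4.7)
The plan is to (i) obtain an explicit formula for $\tilde{g}_l$ from the standard Cheeger submersion picture, (ii) take a $C^p$-limit as $l\to 0$, and (iii) verify the geometric properties of the limit $\tilde g$, of which the totally geodesic claim is the most delicate.

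For (i), consider the Riemannian submersion $q\colon(G\times M,l^2 g_{\mathrm{bi}}+g_M)\to(M,g_l)$. At $(e,x)$ the vertical space is $\{(-X,X^*_x):X\in\mathfrak g\}$, and a direct horizontal-lift computation shows that the $g_l$-horizontal distribution at $x$ coincides with $T_xG(x)^\perp$ for every $l$, that $g_l=g_M$ there, and that on orbit directions
\[
g_l(X^*_x,Y^*_x)=\bigl\langle l^2(l^2 I+P_x)^{-1}P_x X,\,Y\bigr\rangle_{\mathrm{bi}},\qquad X,Y\in\mathfrak g_x^\perp,
\]
where $P_x\colon\mathfrak g_x^\perp\to\mathfrak g_x^\perp$ is the orbit tensor characterized by $\langle P_xX,Y\rangle_{\mathrm{bi}}=g_M(X^*_x,Y^*_x)$. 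Dividing the orbit piece by $l^2$ yields $\tilde g_l(X^*_x,Y^*_x)=\langle(l^2 I+P_x)^{-1}P_x X,Y\rangle_{\mathrm{bi}}$.

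For (ii), on the precompact $\mathcal U\subset M^{\mathrm{reg}}$ the principal isotropy is locally constant up to conjugacy, so $\mathfrak g_x$ varies smoothly as a subbundle of $\mathcal U\times\mathfrak g$ and $P_x$ is a smooth family of positive operators whose eigenvalues are bounded uniformly above and away from $0$. Hence $(l^2 I+P_x)^{-1}P_x\to I$ in $C^p$ on $\mathcal U$ as $l\to 0$, and $\tilde g_l$ converges in $C^p$ to the $G$-invariant metric $\tilde g$ defined by $\tilde g|_{T_xG(x)^\perp}=g_M|_{T_xG(x)^\perp}$ and $\tilde g(X^*_x,Y^*_x)=\langle X,Y\rangle_{\mathrm{bi}}$ for $X,Y\in\mathfrak g_x^\perp$. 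The second formula identifies the induced metric on each orbit $G(x)\cong G/G_x$ with the normal homogeneous one coming from $g_{\mathrm{bi}}$; the first, together with the $\tilde g$-orthogonality of the horizontal and vertical distributions and the fact that $g_M$ restricted to horizontal descends to $\mathcal U/G$, exhibits $(\mathcal U,\tilde g)\to\mathcal U/G$ as a Riemannian submersion.

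For the totally geodesic claim I would apply Koszul with the fundamental Killing fields $X^*,Y^*$ and a basic horizontal vector field $Z$. Basic horizontal vector fields are automatically $G$-invariant, so $[X^*,Z]=[Y^*,Z]=0$; also $[X^*,Y^*]=-[X,Y]^*$ is vertical, hence orthogonal to $Z$. The Koszul identity collapses to $2\tilde g(\nabla_{X^*}Y^*,Z)=-Z\cdot\tilde g(X^*,Y^*)$. At a principal point $x$, the normal slice has constant isotropy algebra $\mathfrak g_x$, so along the slice $\tilde g(X^*,Y^*)$ equals $\langle X^\perp,Y^\perp\rangle_{\mathrm{bi}}$ with $\perp$ taken relative to the \emph{fixed} subalgebra $\mathfrak g_x$, and is therefore constant in the slice variable; since horizontal vectors at $x$ are slice-tangent, $Z\cdot\tilde g(X^*,Y^*)$ vanishes at $x$. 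This forces $\nabla_{X^*}Y^*$ to be vertical, and since the $X^*$'s span the vertical space at each principal point, the second fundamental form of each fiber vanishes. The hardest part is this last step, where one must leverage the principal-slice theorem to convert the pointwise bi-invariant formula for $\tilde g$ into a statement about horizontal derivatives of $\tilde g(X^*,Y^*)$ uniformly across $\mathcal U$, and justify passage from arbitrary horizontal vectors to basic ones in the Koszul computation.
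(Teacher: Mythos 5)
Your proposal is correct, but it takes a genuinely different route from the paper on both of its main steps. For the convergence, you invoke the explicit orbit-tensor formula $g_l(X^*_x,Y^*_x)=\langle l^2(l^2I+P_x)^{-1}P_xX,Y\rangle_{\mathrm{bi}}$, which the paper deliberately avoids (``the explicit formula is rather unpleasant\dots we will use abstract, asymptotic arguments''); instead the paper expands $(l^2Ch_l)^{-1}=\kappa^{-1}\circ K_M^{-1}+O(l^2)E$ and packages the discrepancy as a symmetric tensor $l^2\tilde{\mathcal E}$ with $|\tilde{\mathcal E}|_{C^p}\le C$ on compacta. Your version makes the $C^p$ convergence transparent --- it reduces to smoothness and uniform positivity of $P_x$ over the precompact $\mathcal U$ --- and it identifies the limiting orbit metric with $g_{\mathrm{nh},x}$ immediately, since $g_{\mathrm{nh},x}$ at $eG_x$ is exactly $g_{\mathrm{bi}}$ on $\mathfrak m_x$. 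For the totally geodesic claim the divergence is sharper: the paper never touches the limit metric directly, but proves the quantitative estimate $|T^{\tilde g_l}|\le Cl^2|T^{g_M}|$ by lifting $Ch_l(V)$, $Ch_l(W)$, $Ch_l(Z)$ to $G\times M$, observing that the numerator $g_l(T_{Ch_l(V)}Ch_l(W),Ch_l(Z))=g_M(T^{g_M}_VW,Z)$ is $l$-independent while $|Ch_l(V)|^2=|\kappa(V)|^2/l^2+1$ blows up, and then transferring from $g_l$ to $\tilde g_l$ by Koszul. You instead compute directly in the limit: Koszul with Killing fields and a basic horizontal $Z$ collapses to $-Z\cdot\tilde g(X^*,Y^*)$, and the principal slice theorem ($G_y=G_x$ for $y$ in the slice, hence $\mathfrak m_y=\mathfrak m_x$, hence $\tilde g(X^*,Y^*)=\langle X^{\mathfrak m_x},Y^{\mathfrak m_x}\rangle_{\mathrm{bi}}$ is literally constant along the slice, whose tangent space at $x$ is $T_xG(x)^\perp$) kills that term. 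The step you flag as hardest is in fact already justified by exactly this observation, together with tensoriality of the second fundamental form, which lets you restrict to Killing fields and basic fields. The trade-off: your argument is more direct and elementary for the limit object itself, while the paper's yields the extra information that the $T$-tensor decays at rate $O(l^2)$ along the whole deformation, not merely that it vanishes in the limit.
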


The normal homogeneous metrics on the fibers have the following description.

\begin{theoremlet}
\label{vert re-scl II thm}For any $x\in \mathcal{U}$ with isotropy $G_{x},$
let $\Phi _{x}:G/G_{x}\longrightarrow G\left( x\right) $ be the $G$%
--equivariant diffeomorphism, $\Phi _{x}\left( gG_{x}\right) =gx.$ Let $g_{%
\mathrm{nh,x}}$ be the normal homogeneous metric on $G/G_{x}$ induced by the
submersion $\left( G,g_{\mathrm{bi}}\right) \longrightarrow G/G_{x}.$ Then $%
\Phi _{x}:\left( G/G_{x},\,g_{\mathrm{nh,x}}\right) \longrightarrow \left( 
\mathcal{U},\,\tilde{g}\right) $ is a Riemannian embedding whose image is
totally geodesic.
\end{theoremlet}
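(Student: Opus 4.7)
The plan is to derive Theorem B as an identification of the fiber metric already produced by Theorem A. Theorem A gives, for each $x\in\mathcal{U}$, that $\tilde g_l\to\tilde g$ in $C^p$ and that the orbit $G(x)$ is a totally geodesic fiber of $(\mathcal{U},\tilde g)\to\mathcal{U}/G$ carrying some normal homogeneous metric. Since $\Phi_x$ is a $G$-equivariant diffeomorphism onto $G(x)\subset\mathcal{U}$, and both $g_{\mathrm{nh,x}}$ on $G/G_x$ and $\tilde g|_{G(x)}$ are $G$-invariant, totally geodesic follows directly, and the only substantive claim is that $\Phi_x^{*}\bigl(\tilde g|_{G(x)}\bigr)=g_{\mathrm{nh,x}}$.

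By $G$-equivariance I only need to verify this equality on one tangent space, namely $T_{eG_x}(G/G_x)$. Identify this space with $\mathfrak{m}:=\mathfrak{g}_x^{\perp_{g_{\mathrm{bi}}}}\subset\mathfrak{g}$, so that $g_{\mathrm{nh,x}}$ is simply $g_{\mathrm{bi}}|_{\mathfrak{m}}$, and $d\Phi_x$ sends $X\in\mathfrak{m}$ to the action vector $X^{*}_x\in T_xG(x)$. Thus Theorem B reduces to the identity $|X^{*}_x|^2_{\tilde g}=|X|^2_{g_{\mathrm{bi}}}$ for every $X\in\mathfrak{m}$.

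To prove this identity I would compute $|X^{*}_x|^2_{\tilde g_l}$ from the Cheeger construction and then let $l\to 0$. Introduce the orbit tensor $P_x\colon\mathfrak{g}\to\mathfrak{g}$, symmetric with respect to $g_{\mathrm{bi}}$, defined by $\langle P_xY,Z\rangle_{g_{\mathrm{bi}}}=\langle Y^{*}_x,Z^{*}_x\rangle_{g_M}$; note $P_x$ vanishes on $\mathfrak{g}_x$ and is positive-definite on $\mathfrak{m}$. For the Cheeger submersion $(G\times M,\,l^2g_{\mathrm{bi}}+g_M)\to(M,g_l)$, the vertical vectors at $(e,x)$ are $(-Y,Y^{*}_x)$ with $Y\in\mathfrak{g}$, so a standard Lagrange-multiplier computation shows that the horizontal lift of $X^{*}_x$ is
\begin{equation*}
\bigl(W,\,(X-W)^{*}_x\bigr),\qquad W=(l^{2}\mathrm{Id}+P_x)^{-1}P_xX.
\end{equation*}
Evaluating lengths gives
\begin{equation*}
|X^{*}_x|^{2}_{\tilde g_l}=\tfrac{1}{l^{2}}|X^{*}_x|^{2}_{g_l}=|W|^{2}_{g_{\mathrm{bi}}}+\tfrac{1}{l^{2}}\langle P_x(X-W),X-W\rangle_{g_{\mathrm{bi}}}.
\end{equation*}
On $\mathfrak{m}$ one has $X-W=l^{2}(l^{2}\mathrm{Id}+P_x)^{-1}X$, so the second term is $O(l^{2})$, while $W\to X$, yielding $|X^{*}_x|^2_{\tilde g}=|X|^2_{g_{\mathrm{bi}}}$ in the $C^0$-limit supplied by Theorem A.

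The main obstacle is precisely the bookkeeping in the horizontal-lift computation: one must keep careful track of the $\mathfrak{g}_x$–$\mathfrak{m}$ decomposition so that $P_x$ is inverted only where it is positive definite, and one must verify that the limit of the Cheeger norm really is the $\tilde g$-norm (rather than just the $g$-norm), which is where the rescaling factor $1/l^{2}$ built into the definition of $\tilde g_l$ on vertical vectors is indispensable. Once these two points are settled, $G$-equivariance propagates the pointwise isometry to all of $G/G_x$, and the totally geodesic assertion is inherited from Theorem A.
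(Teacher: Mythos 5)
Your proposal is correct, but it reaches the metric identification by a genuinely different route from the paper. The paper deliberately avoids the explicit formula for the horizontal lift: it works with the $l$--independent map $\kappa_x$ (the first component of the lift that projects to $v$ under $\pi_2:G\times M\to M$), derives the identity $l^{2}Ch_{l}=K_{M}\circ \kappa +l^{2}\mathrm{id}$, inverts it asymptotically, and concludes that $\tilde{g}_{l}$ restricted to the orbits equals $\left( K_{M}|^{-1}\right)^{\ast }(g_{\mathrm{bi}})+l^{2}\mathcal{\tilde{E}}$ with $|\mathcal{\tilde{E}}|_{C^{p}}$ uniformly bounded on compact sets; pulling back by $\Phi_x$ via $\left( D\Phi _{x}\right) _{eG_{x}}=K_{M,x}\circ K_{G/G_x,eG_x}|_{\mathfrak{m}_x}^{-1}$ then yields $g_{\mathrm{nh,x}}+O(l^{2})$. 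You instead use the orbit tensor $P_x$ and a Lagrange-multiplier computation of the genuine $Dq$--horizontal lift, obtaining the exact expression $W=(l^{2}\mathrm{Id}+P_x)^{-1}P_xX$ and the clean asymptotics $X-W=l^{2}(l^{2}\mathrm{Id}+P_x)^{-1}X$; this is the M\"uter--Ziller style of argument the authors explicitly chose to bypass. Your computation is correct (including the care with inverting $P_x$ only on $\mathfrak{m}_x$), and for the purposes of Theorem B --- a pointwise $C^{0}$ identification of the limit fiber metric, given the convergence supplied by Theorem A --- it is entirely sufficient and arguably more transparent. What the paper's abstract version buys is the uniform $C^{p}$ control of the error tensor over compact subsets of $M^{\text{reg}}$, which is what drives the $C^{p}$ convergence in Theorem A; your pointwise formula would need the analogue of Proposition \ref{C-p bounds prop} to recover that. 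Finally, note that you outsource the totally geodesic assertion entirely to Theorem A; in the paper the two theorems are proved together, and that assertion is established separately via the estimate $\left\vert T^{\tilde{g}_{l}}\right\vert \leq Cl^{2}\left\vert T^{g_{M}}\right\vert$ on the O'Neill $T$--tensor, so if you intend your argument to stand alone you would still need that (or an equivalent) computation.
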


\begin{remark-cath}
While the embedding $\Phi _{x}:\left( G/G_{x},\,g_{\mathrm{nh,x}}\right)
\longrightarrow \left( \mathcal{U},\,\tilde{g}\right) $ preserves the
Riemannian metric and has totally geodesic image, it need not be an isometry
in the metric space sense, that is, the intrinsic and extrinsic metrics on
the orbits need not coincide. Consider a \textquotedblleft
Berger\textquotedblright\ sphere obtained by expanding the constant
curvature $1$ metric in the Hopf directions, and leaving the metric on the
horizontal distribution unchanged. It follows that the Hopf semi-circles
between pairs of antipodal points have length $>\pi .$ Since every geodesic
which is horizontal for the Hopf fibration connects antipodal points, the
extrinsic distance between any pair of antipodal points is $\leq \pi ,$ and
so the intrinsic and extrinsic metrics on the Hopf fibers are different.
\end{remark-cath}

\begin{remark-cath}
The class, $\mathcal{P},$ of principal $G$--manifolds with totally geodesic,
normal homogeneous orbits is invariant under Cheeger deformation. Theorems A
and B say that all principal $G$--manifolds are attracted to $\mathcal{P}$
by Cheeger deformations.
\end{remark-cath}

As Cheeger deformations and $G$--manifolds have been extensively studied,
others may be aware of Theorems A and B. The closest result that we found in
the literature, due to Schwachh\"{o}fer and Tapp, is Proposition 1.1 in \cite%
{SchTapp}, which deals with the case of Cheeger deforming a homogeneous space%
$,$ $M=G/H$, via $G.$

We believe there are many potential applications of Theorems \ref{vert
rescale} and \ref{vert re-scl II thm}. For example, some of the curvature
estimates in \cite{SearleWilh} can be obtained by combining Theorems \ref%
{vert rescale} and \ref{vert re-scl II thm} with the Gray--O'Neill
fundamental equations of a submersion \cite{Gray}, \cite{O'Neill}.

The paper is organized as follows. In Section 1, we establish our notations
and conventions, and in Section 2, we prove Theorems \ref{vert rescale} and %
\ref{vert re-scl II thm}.

\noindent \textbf{Acknowledgments: }We are grateful to Peter Petersen,
Wilderich Tuschmann, and Burkhard Wilking for stimulating conversations on
this topic.

\section{Notations and Conventions}

\numberwithin{equation}{algorithm}

Throughout we assume that the compact Lie group, $G,$ with bi-invariant
metric $g_{\mathrm{bi}}$, acts isometrically on the complete Riemannian
manifold $\left( M,g_{M}\right) .$ The orbit through $x\in M$ is called $%
G\left( x\right) $ and the isotropy subgroup at $x$ is $G_{x}.$ We denote
the Lie algebra of $G$ by $\mathfrak{g},$ and the Lie algebra of $G_{x}$ by $%
\mathfrak{g}_{x}.$ We call $\mathfrak{m}_{x}$ the orthogonal complement,
with respect to $g_{\mathrm{bi}},$ of $\mathfrak{g}_{x}$ in $\mathfrak{g}.$
For the distribution on $M^{\text{reg}}$ given by the tangent spaces to the
orbits of $G,$ we write $T\left( \mathrm{orbits}\right) .$

For an abstract $G$--manifold, $N,$ let 
\begin{equation}
K_{N}:\mathfrak{g}\times N\longrightarrow TN  \label{K_N dfn}
\end{equation}%
be the bundle map that takes $\left( k,x\right) \in \mathfrak{g}\times N$ to
the value at $x$ of the Killing field generated by $k,$ and let $%
K_{N,x}=\left. K_{N}\right\vert _{\mathfrak{g}\times \left\{ x\right\} }.$
Note that the map $K_{N}$ depends not just on $N$, but on the particular $G$%
--action on $N$. We adopt the convention that when $N=G,$ the $G$--action is
by right multiplication. The corresponding bundle map $K_{G}:\mathfrak{g}%
\times G\longrightarrow TG$ is then the trivialization of $TG$ given by the
left invariant fields.

Since $G$ is a $G$-manifold via various $G$-actions, the map $K_{G}:%
\mathfrak{g}\times G\longrightarrow TG$ has more than one possible meaning.
We adopt the convention that $K_{G}:\mathfrak{g}\times G\longrightarrow TG$
is the bundle map that corresponds to the action of $G$ on $G$ by right
multiplication.

For $x\in M^{\text{reg}},$ define $\tilde{\Phi}_{x}:G\longrightarrow G\left(
x\right) $ by $\tilde{\Phi}_{x}\left( g\right) =gx.$ Let $\pi
:G\longrightarrow G/G_{x}$ be the quotient map$,$ and let $\Phi
_{x}:G/G_{x}\longrightarrow G\left( x\right) $ be the $G$--equivariant
diffeomorphism given by $\Phi _{x}\left( gG_{x}\right) =gx.$ Since $\Phi
_{x}\circ \pi =\tilde{\Phi}_{x},$ $D\pi _{e}=K_{G/G_{x},\text{ }eG_{x}}$ and 
$\left( D\tilde{\Phi}_{x}\right) _{e}=K_{M,x},$ the chain rule gives%
\begin{equation*}
\left( D\Phi _{x}\right) _{eG_{x}}\circ K_{G/G_{x},\text{ }eG_{x}}=K_{M,x}.
\end{equation*}%
Since $\left. K_{G/G_{x},\text{ }eG_{x}}\right\vert _{\mathfrak{m}_{x}}$ is
invertible, 
\begin{equation}
\left( D\Phi _{x}\right) _{eG_{x}}=K_{M,x}\circ \left. K_{G/G_{x},\text{ }%
eG_{x}}\right\vert _{\mathfrak{m}_{x}}^{-1}.  \label{DPhi eqn}
\end{equation}

Note that the differential of the quotient map 
\begin{equation*}
q:(p,m)\mapsto pm
\end{equation*}%
for the Cheeger Action, $g(p,m)=(pg^{-1},$ $gm),$ is%
\begin{equation}
Dq_{(p,m)}\left( k,v\right) =K_{M,x}\left( k\right) +v.  \label{difff of q}
\end{equation}

Recall from Chapter 2 of Hirsch \cite{Hir} that two smooth maps $\Phi ,\Psi
:M\longrightarrow N$ are close in the weak $C^{p}$--topology if all of their
values and partials up to order $p$ are close with respect to fixed atlases
for $M$ and $N.$ If the atlases are both finite, this leads to a notion of $%
C^{p}$--distance, which depends on the atlases, but will serve our purposes.

For bundle maps and tensors we will need a $C^{p}$--norm, which we now
define. Recall that a Euclidean metric on a vector bundle $E$ restricts to
an inner product on each fiber of $E$ and these inner products vary
smoothly. Given vector bundles $E_{1}$ and $E_{2}$ with Euclidean metrics
and a bundle map 
\begin{equation*}
\varphi :E_{1}\longrightarrow E_{2},
\end{equation*}%
we define the $C^{p}$--norm of $\varphi ,$ $\left\vert \varphi \right\vert
_{C^{p}},$ as follows. Let $E_{1}^{1}$ be the unit sphere bundle of $E_{1}.$
Define $\left\vert \varphi \right\vert _{C^{p}}$ to be the $C^{p}$--distance
from $\varphi |_{E_{1}^{1}}$ to the zero bundle map. The $C^{p}$--norm of a
tensor is its $C^{p}$--distance to the zero section. We note that the $C^{p}$%
--norm of a bundle map or tensor depends on the given Euclidean metrics.
With the exception of $TM$, all of the vector bundles we consider will come
with a clear choice of metric. For bundle maps $\varphi $ that go to or from 
$TM$ and for tensors $\omega $ on $M,$ we adopt the convention that $%
\left\vert \varphi \right\vert _{C^{p}}$ and $\left\vert \omega \right\vert
_{C^{p}}$ are defined in terms of our initial $G$--invariant metric $g_{M}.$

\section{Regular Structure Theorem}

The vertical space for $q$ at $(g,x)\in G\times M$ is%
\begin{equation*}
\mathcal{V}=\{(-K_{G}\left( k\right) ,K_{M}\left( k\right) )\ |\ k\in 
\mathfrak{g}\}.
\end{equation*}

We recall from \cite{Cheeg}, \cite{PetWilh1}, \cite{SearleWilh} that there
is a linear reparametrization of the tangent space, called the \emph{Cheeger
reparametrization}. It is denoted by 
\begin{equation*}
Ch_{l}:TM\rightarrow TM
\end{equation*}%
and defined as 
\begin{equation*}
Ch_{l}\left( v\right) =Dq\left( \hat{v}_{l}\right) ,
\end{equation*}%
where $\hat{v}_{l}\in TG\times TM$ is the horizontal vector for 
\begin{equation*}
q:\left( G\times M,l^{2}g_{\mathrm{bi}}+g_{M}\right) \longrightarrow \left(
M,g_{l}\right)
\end{equation*}%
that projects to $v$ under the projection $\pi _{2}:$ $G\times
M\longrightarrow M.$

Although, $\hat{v}_{l}$ is completely determined by $v,$ $g_{\mathrm{bi}},$ $%
g_{M},$ and the $G$--action, the explicit formula is rather unpleasant, \cite%
{Muet}, \cite{Zil}. Fortunately, we will not need it, as we will use
abstract, asymptotic arguments.

Every $G$--orbit in $G\times M$ has a unique point of the form $(e,m).$ To
fix notation, we assume throughout that we are at such a point. When $l=1$
and $v\in T_{x}M,$ we denote the first factor of $\hat{v}_{1}$ by $\kappa
_{x}\left( v\right) .$ Then 
\begin{equation}
\hat{v}_{1}=\left( \kappa _{x}\left( v\right) ,v\right) .
\end{equation}%
For any $l,$ we then have 
\begin{equation*}
\hat{v}_{l}=\left( \frac{\kappa _{x}\left( v\right) }{l^{2}},v\right) .
\end{equation*}%
For simplicity, we will write $\hat{v}$ for $\hat{v}_{l}.$

\begin{proposition}
\label{K and Kappa}For $x\in M^{\text{reg}}$ we have the following.

\noindent 1. $K_{M,x}|_{\mathfrak{m}_{x}}:\mathfrak{m}_{x}\longrightarrow
T_{x}G\left( x\right) $ is an isomorphism that varies smoothly with $x.$

\noindent 2. The map $\kappa _{x}:T_{x}M\longrightarrow \mathfrak{g}_{x},$
given by $v\mapsto \kappa _{x}\left( v\right) ,$ takes values in $\mathfrak{m%
}_{x}$ and restricts to a linear isomorphism, $T_{x}G\left( x\right)
\longrightarrow \mathfrak{m}_{x},$ that varies smoothly with $x\in M^{\text{%
reg}}.$
\end{proposition}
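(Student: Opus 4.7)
The plan is to read off both statements from two ingredients: the standard fact that the Killing generator map $K_{M,x} : \mathfrak{g} \to T_xM$ has kernel $\mathfrak{g}_x$ and image $T_xG(x)$, and the horizontality characterization of $\kappa_x(v)$ for the Cheeger submersion at $l=1$.

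For Part 1, restricting $K_{M,x}$ to the $g_{\mathrm{bi}}$--orthogonal complement $\mathfrak{m}_x$ of its kernel gives a linear isomorphism onto $T_xG(x)$ for algebraic reasons. For smoothness in $x$, I would invoke the slice theorem on the principal stratum: near any $x_0 \in M^{\mathrm{reg}}$, there is a smooth local map $g:U \to G$ with $G_x = g(x)\,G_{x_0}\,g(x)^{-1}$, so $\mathfrak{g}_x = \mathrm{Ad}_{g(x)}\mathfrak{g}_{x_0}$ and $\mathfrak{m}_x = \mathrm{Ad}_{g(x)}\mathfrak{m}_{x_0}$ depend smoothly on $x$. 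Combined with the smoothness of $K_M$, this gives smoothness of $K_{M,x}|_{\mathfrak{m}_x}$.

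For Part 2, I would first extract an intrinsic formula for $\kappa_x$. Writing $\hat v_1 = (\kappa_x(v), v)$ and using $\mathcal{V} = \{(-K_G(k), K_{M,x}(k)) : k \in \mathfrak{g}\}$ together with $K_G(k,e) = k$, the horizontality of $\hat v_1$ against each vertical vector in $(G \times M, g_{\mathrm{bi}} + g_M)$ becomes
\begin{equation*}
\langle \kappa_x(v), k \rangle_{\mathrm{bi}} = \langle v, K_{M,x}(k) \rangle_{g_M} \qquad \text{for every } k \in \mathfrak{g}.
\end{equation*}
Specializing to $k \in \mathfrak{g}_x$ forces $\kappa_x(v) \perp \mathfrak{g}_x$, that is, $\kappa_x(v) \in \mathfrak{m}_x$. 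If $v \in T_xG(x)^\perp$, the right-hand side vanishes for every $k$ since $K_{M,x}(k) \in T_xG(x)$, hence $\kappa_x(v) = 0$; so $\kappa_x$ is determined by its restriction to $T_xG(x)$. For $v = K_{M,x}(k_0)$ with $k_0 \in \mathfrak{m}_x$, the equation restricted to $k \in \mathfrak{m}_x$ reads
\begin{equation*}
\langle \kappa_x(v), k \rangle_{\mathrm{bi}} = \langle K_{M,x}(k_0), K_{M,x}(k) \rangle_{g_M},
\end{equation*}
and the right-hand side is a positive-definite symmetric bilinear form on $\mathfrak{m}_x$ by Part 1. Consequently the map $k_0 \mapsto \kappa_x(K_{M,x}(k_0))$ is a self-adjoint isomorphism of $\mathfrak{m}_x$, forcing $\kappa_x|_{T_xG(x)} : T_xG(x) \to \mathfrak{m}_x$ to be a linear isomorphism. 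Smoothness of $\kappa_x$ in $x$ then follows because $\kappa_x(v)$ is defined by a linear system in $\mathfrak{m}_x$ whose coefficients depend smoothly on $x$.

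The only step requiring genuine work is the smoothness of $x \mapsto \mathfrak{m}_x$ on the principal stratum, which is a standard consequence of the slice theorem; everything else is a direct manipulation of the horizontality equation.
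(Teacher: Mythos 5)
Your proof is correct and rests on the same key mechanism as the paper's: pairing the lift $\hat v_1=(\kappa_x(v),v)$ against the vertical vectors $(-k,K_{M,x}(k))$, and specializing to $k\in\mathfrak{g}_x$ to force $\kappa_x(v)\in\mathfrak{m}_x$. Where you diverge slightly is in the final step: the paper gets the isomorphism $\kappa_x|_{T_xG(x)}:T_xG(x)\to\mathfrak{m}_x$ by showing injectivity (a horizontal $(0,v)$ with $v\in T_xG(x)$ forces $v=0$) and then counting dimensions, whereas you identify $\kappa_x|_{T_xG(x)}$ explicitly as the adjoint of $K_{M,x}|_{\mathfrak{m}_x}$ via the positive-definite Gram form $\langle K_{M,x}(k_0),K_{M,x}(k)\rangle_{g_M}$; this is a bit more work but yields a closed formula for $\kappa_x$ (and makes its linearity and the vanishing on $T_xG(x)^\perp$ immediate), which is the sort of information the paper only extracts later. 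You are also more careful than the paper about the smooth dependence on $x$, supplying the slice-theorem argument for the smoothness of $x\mapsto\mathfrak{m}_x$, which the paper leaves implicit in ``varies smoothly.'' Both routes are sound; the paper's is shorter, yours is more explicit.
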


\begin{proof}
Part $1$ follows from the definition of $K_{M,x}.$

Suppose $\left( u,v\right) \in T_{\left( e,x\right) }\left( G\times M\right) 
$ with $u\notin \mathfrak{m}_{x}.$ Then there is a $k\in \mathfrak{g}_{x}$
with $g_{\mathrm{bi}}\left( k,u\right) \neq 0.$ It follows that 
\begin{eqnarray*}
\left( l^{2}g_{\mathrm{bi}}+g_{M}\right) \left( \left( u,v\right) ,\left(
-K_{G,e}\left( k\right) ,K_{M,x}\left( k\right) \right) \right) &=&\left(
l^{2}g_{\mathrm{bi}}+g_{M}\right) \left( \left( u,v\right) ,\left(
-k,0\right) \right) \\
&\neq &0.
\end{eqnarray*}%
So $\left( u,v\right) $ is not horizontal. It follows that $\kappa _{x}$
takes values in $\mathfrak{m}_{x}.$ $\kappa _{x}$ is linear, since $%
Ch_{l}:T_{x}M\longrightarrow T_{x}M$ is linear and $\kappa _{x}$ is
projection to $G$ composed with $Ch_{l}|_{T_{x}M}$.

For $v\in TG\left( x\right) ,$ if $\left( 0,v\right) \in T\left( G\times
M\right) $ is horizontal, then $v=0,$ and it follows that $\kappa _{x}$ is
injective. Since $\mathrm{\dim }\left( \mathfrak{m}_{x}\right) =\dim \left(
G\left( x\right) \right) ,$ $\kappa _{x}:T_{x}G\left( x\right)
\longrightarrow \mathfrak{m}_{x}$ is, in fact, an isomorphism, proving Part
2.
\end{proof}

Before proceeding we define the following vector bundle over $M^{\text{reg}%
}. $ 
\begin{equation*}
E_{\mathrm{orb}}\equiv \left\{ \left. \left( x,v\right) \in M^{\text{reg}%
}\times \mathfrak{g}\text{ }\right\vert \text{ }v\in \mathfrak{m}%
_{x}\right\} .
\end{equation*}%
$K$ and $\kappa $ are then bundle maps%
\begin{eqnarray*}
&&E_{\mathrm{orb}}\overset{K_{M}}{\longrightarrow }\left. T\left( \mathrm{%
orbits}\right) \right\vert _{M^{\text{reg}}}, \\
&&\left. T\left( \mathrm{orbits}\right) \right\vert _{M^{\text{reg}}}\overset%
{\kappa }{\longrightarrow }E_{\mathrm{orb}}.
\end{eqnarray*}

\begin{proposition}
\label{C-p bounds prop}Given any compact subset $\mathcal{K}\subset M^{\text{%
reg}}$ and any $p\geq 0$ there is a constant $C>0$ so that 
\begin{equation*}
\max \left\{ \left\vert K\right\vert _{C^{p}},\left\vert \kappa \right\vert
_{C^{p}},\left\vert K^{-1}\right\vert _{C^{p}},\left\vert \kappa
^{-1}\right\vert _{C^{p}}\right\} \leq C.
\end{equation*}
\end{proposition}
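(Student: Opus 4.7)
The claim reduces to showing that each of the four bundle maps $K$, $\kappa$, $K^{-1}$, $\kappa^{-1}$ is smooth on the open manifold $M^{\text{reg}}$. Once that is established, the bound over the compact subset $\mathcal{K}$ is automatic: with respect to any fixed finite atlas of charts trivializing the relevant bundles, $\left\vert \varphi \right\vert_{C^{p}}$ is controlled by the pointwise norms of $\varphi$ and of its partial derivatives up to order $p$; these are continuous functions on $M^{\text{reg}}$, hence bounded on $\mathcal{K}$. Proposition~\ref{K and Kappa} already provides fiberwise invertibility of $K$ on $E_{\mathrm{orb}}$ and of $\kappa$ on $T\left( \mathrm{orbits}\right)\vert_{M^{\text{reg}}}$, so once the forward maps are smooth, smoothness of the inverses reduces, in any smooth local frame adapted to the corresponding splittings, to smoothness of matrix inversion.

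\textbf{Smoothness of $K$ and $K^{-1}$.} The Killing map $K_{M}:\mathfrak{g}\times M\longrightarrow TM$ is globally smooth because the $G$--action is smooth. To promote this to smoothness of the bundle map $K:E_{\mathrm{orb}}\longrightarrow T\left( \mathrm{orbits}\right)\vert_{M^{\text{reg}}}$, I need $x\mapsto \mathfrak{m}_{x}$ to vary smoothly on the principal stratum. This is a standard consequence of the slice theorem: on $M^{\text{reg}}$ all isotropy groups are conjugate, and within a tube about a principal orbit one chooses smooth slices whose induced isotropy subalgebras $\mathfrak{g}_{x}$ depend smoothly on $x$; their $g_{\mathrm{bi}}$--orthogonal complements $\mathfrak{m}_{x}$ are then smooth as well. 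So $E_{\mathrm{orb}}$ is a smooth sub-bundle of $M^{\text{reg}}\times \mathfrak{g}$, $K$ is a smooth bundle map, and, being a fiberwise isomorphism by Part~1 of Proposition~\ref{K and Kappa}, its inverse is smooth in any adapted frame.

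\textbf{Smoothness of $\kappa$, $\kappa^{-1}$, and conclusion.} The characterizing property of $\kappa_{x}(v)$ is that $(\kappa_{x}(v),v)$ lies in the $g_{\mathrm{bi}}+g_{M}$--orthogonal complement of the vertical distribution $\mathcal{V}$; using the formula for $\mathcal{V}$ and the fact that $\kappa_{x}(v)\in \mathfrak{m}_{x}$, this becomes the linear system
\[
g_{\mathrm{bi}}\bigl(\kappa_{x}(v),k\bigr)=g_{M}\bigl(v,K_{M,x}(k)\bigr) \quad \text{for all } k\in \mathfrak{m}_{x}.
\]
By the previous step all coefficients vary smoothly in $x$, so the unique solution $\kappa_{x}(v)$ is smooth in $(x,v)$; Part~2 of Proposition~\ref{K and Kappa} then yields a smooth $\kappa^{-1}$ by the same frame argument as for $K^{-1}$. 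The genuine obstacle in the whole argument is this smoothness of $x\mapsto \mathfrak{m}_{x}$, which is precisely why the singular stratum must be excluded. With the four smoothness statements in hand, the opening paragraph produces individual $C^{p}$--bounds $C_{K},C_{\kappa},C_{K^{-1}},C_{\kappa^{-1}}$ over $\mathcal{K}$, and taking $C$ to be their maximum completes the proof.
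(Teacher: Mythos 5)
Your proof is correct and takes the same route as the paper's: the paper's entire argument is that $K$, $\kappa$, $K^{-1}$, $\kappa^{-1}$ are $C^{\infty}$ and that the relevant unit sphere bundles over $\mathcal{K}$ are compact. You simply supply the smoothness details the paper leaves implicit (smoothness of $x\mapsto \mathfrak{m}_{x}$ on the principal stratum and the orthogonality characterization of $\kappa_{x}(v)$ as the solution of a smoothly varying nondegenerate linear system), all of which check out.
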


\begin{proof}
This follows from compactness of the corresponding unit sphere bundles and
the fact that $K$, $\kappa ,$ $K^{-1},$ and $\kappa ^{-1}$ are $C^{\infty }.$
\end{proof}

The next result shows that along the orbits $\tilde{g}_{l}$ is approximately 
$\left( K_{M,x}|_{\mathfrak{m}_{x}}^{-1}\right) ^{\ast }\left( g_{\mathrm{bi}%
}\right) ,$ and the error in this approximation has the form $l^{2}\mathcal{%
\tilde{E}}$ for some bounded, symmetric $\left( 0,2\right) $--tensor $%
\mathcal{\tilde{E}}.$

\begin{lemma}
\label{orbital change}Given any compact subset $\mathcal{K}\subset M^{\text{%
reg}},$ there is an $l_{0}>0$ so that for all $l\in \left( 0,l_{0}\right) $
there is a symmetric $\left( 0,2\right) $--tensor $\mathcal{\tilde{E}}$ and
a constant $C>0$ with the following property: 
\begin{eqnarray}
\tilde{g}_{l}|_{_{\left. T\left( \mathrm{orbits}\right) \right\vert _{%
\mathcal{K}}}} &=&\left( K_{M}|^{-1}\right) ^{\ast }\left( g_{\mathrm{bi}%
}\right) +l^{2}\mathcal{\tilde{E}}\text{ and \label{tildeg on orbit eqn}} \\
\left\vert \mathcal{\tilde{E}}\right\vert _{C^{p}} &\leq &C.  \notag
\end{eqnarray}
\end{lemma}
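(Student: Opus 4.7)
The plan is to give an explicit formula for $\tilde g_l$ along the orbits, extract its $l\to 0$ limit, and read off the error. Since $\tilde g_l|_{T(\mathrm{orbits})} = l^{-2}g_l|_{T(\mathrm{orbits})}$, and $g_l$ is by construction the Riemannian submersion metric coming from $q:(G\times M, l^2g_{\mathrm{bi}}+g_M)\to(M,g_l)$, I would start from the identity
$$g_l(Ch_l(v), Ch_l(v)) = l^{-2}\,g_{\mathrm{bi}}(\kappa_x(v),\kappa_x(v)) + g_M(v,v),$$
obtained by plugging $\hat v_l = (\kappa_x(v)/l^2, v)$ into the submersion metric. Because the Cheeger reparametrisation is $G$--equivariant and linear, it preserves $T_xG(x)$, so it suffices to evaluate everything there.

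Next I would introduce the symmetric positive-definite endomorphism $P_x:\mathfrak m_x\to \mathfrak m_x$ defined by $g_{\mathrm{bi}}(P_x a, b) = g_M(K_{M,x}(a), K_{M,x}(b))$; equivalently, $P_x = \kappa_x\circ K_{M,x}|_{\mathfrak m_x}$, so by Proposition~\ref{K and Kappa} it is invertible on $M^{\mathrm{reg}}$. Writing $X = K_{M,x}(\bar X)$ with $\bar X\in\mathfrak m_x$ and solving $Ch_l(v) = X$ for $v = K_{M,x}(\bar v)$ gives $\bar v = l^2(P_x+l^2I)^{-1}\bar X$. Substituting back into the submersion formula above collapses everything into the single clean identity
$$\tilde g_l|_{T_xG(x)}(X,X) = g_{\mathrm{bi}}\bigl(P_x(P_x + l^2 I)^{-1}\bar X,\,\bar X\bigr).$$
The algebraic identity $P_x(P_x+l^2I)^{-1} = I - l^2(P_x+l^2I)^{-1}$ immediately separates the leading term from the correction:
$$\tilde g_l(X,Y) = g_{\mathrm{bi}}(\bar X,\bar Y) - l^2\,g_{\mathrm{bi}}\bigl((P_x+l^2I)^{-1}\bar X,\,\bar Y\bigr).$$
The first term is by definition $(K_M|^{-1})^{\ast}g_{\mathrm{bi}}$, so I would set $\mathcal{\tilde E}(X,Y) := -g_{\mathrm{bi}}((P+l^2I)^{-1}\bar X,\bar Y)$.

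For the uniform $C^p$ bound I would invoke Proposition~\ref{C-p bounds prop}: on the compact set $\mathcal K\subset M^{\mathrm{reg}}$, both $P = \kappa\circ K$ and $P^{-1} = K^{-1}\circ\kappa^{-1}$ have all $C^p$ norms bounded by a constant depending only on $\mathcal K$ and $p$, and the eigenvalues of $P$ are bounded below by some $\lambda_0>0$. For $l^2 < \lambda_0$, a Neumann series shows $(P+l^2I)^{-1}$ is close to $P^{-1}$ in operator norm uniformly on $\mathcal K$, and its higher derivatives are generated by the identity $\partial(P+l^2I)^{-1} = -(P+l^2I)^{-1}(\partial P)(P+l^2I)^{-1}$ applied iteratively. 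Each factor on the right is uniformly bounded in $l$, so the $C^p$ norm of $(P+l^2I)^{-1}$, and hence of $\mathcal{\tilde E}$, is bounded independently of $l\in(0,l_0)$. The only nontrivial point is this uniformity in $l$: derivatives of $(P+l^2I)^{-1}$ involve arbitrarily many factors of $(P+l^2I)^{-1}$, and it is the spectral lower bound on $P$ coming from compactness of $\mathcal K$, rather than any smallness of $l$, that controls them.
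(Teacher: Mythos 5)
Your proof is correct, and it takes a genuinely more explicit route than the paper's. Both arguments start from the same two facts --- the pullback identity $g_l(Ch_l(v),Ch_l(w))=l^{-2}g_{\mathrm{bi}}(\kappa(v),\kappa(w))+g_M(v,w)$ and the formula $l^2Ch_l=K_M\circ\kappa+l^2\,\mathrm{id}$ on $T(\mathrm{orbits})$ --- but they diverge at the inversion step. The paper inverts $l^2Ch_l$ abstractly, writing $\left(l^2Ch_l\right)^{-1}=\kappa^{-1}\circ K_M^{-1}+O\left(l^2\right)E$ with $\left\vert E\right\vert_{C^p}\leq 1$ for small $l$, and then collects all the $O(l^2)$ contributions into $\mathcal{\tilde{E}}$, invoking Proposition \ref{C-p bounds prop} for the uniform bound. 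You instead introduce the orbit tensor $P_x=\kappa_x\circ K_{M,x}|_{\mathfrak{m}_x}$ (equivalently $g_{\mathrm{bi}}(P_xa,b)=g_M(K_{M,x}a,K_{M,x}b)$, which is indeed the same operator, since horizontality of $(\kappa(v),v)$ forces $\kappa(K_M(\bar v))=P\bar v$) and solve the inversion exactly, arriving at the closed form $\tilde g_l(X,Y)=g_{\mathrm{bi}}\left(P(P+l^2I)^{-1}\bar X,\bar Y\right)$ and hence $\mathcal{\tilde{E}}(X,Y)=-g_{\mathrm{bi}}\left((P+l^2I)^{-1}\bar X,\bar Y\right)$. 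Your $C^p$ bound is also sound: the spectral lower bound $\lambda_0>0$ for $P$ on the compact set $\mathcal{K}$, together with the iterated derivative formula for $(P+l^2I)^{-1}$, gives bounds uniform in $l$, and you correctly identify that the uniformity comes from compactness rather than from smallness of $l$ (so in your version no restriction to $l<l_0$ is even needed for this lemma). What your approach buys is an explicit, sign-definite error term --- $\mathcal{\tilde{E}}$ is negative definite, so $\tilde g_l$ increases monotonically to the limit metric along the orbits --- at the cost of introducing $P$; the paper's argument is shorter and requires no exact formula, which is consistent with its stated preference for ``abstract, asymptotic arguments.'' One small inaccuracy worth fixing: the reason $Ch_l$ preserves $T_xG(x)$ is not really $G$--equivariance, but the fact that $\kappa(v)\in\mathfrak{m}_x$ and $K_M(\mathfrak{m}_x)=T_xG(x)$ (and $\kappa$ vanishes on $TG(x)^{\perp}$); this is immediate from Proposition \ref{K and Kappa} and Equation \ref{difff of q}, so it does not affect the validity of your argument.
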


\begin{proof}
For $x\in \mathcal{K}\subset M^{\text{reg }}$ and $v,w\in \left. T\left( 
\mathrm{orbits}\right) \right\vert _{\mathcal{K}},$ using Equation \ref%
{difff of q} we find%
\begin{eqnarray}
l^{2}Ch_{l}\left( v\right) &=&Dq\left( l^{2}\left( \frac{\kappa \left(
v\right) }{l^{2}},v\right) \right)  \notag \\
&=&K_{M}\left( \kappa \left( v\right) \right) +l^{2}v.  \label{Ch_l}
\end{eqnarray}

The definition of $g_{l}$ and $Ch_{l}$ gives 
\begin{eqnarray}
\frac{1}{l^{2}}g_{l}\left( l^{2}Ch_{l}\left( v\right) ,l^{2}Ch_{l}\left(
w\right) \right) &=&l^{2}\left( l^{2}g_{\mathrm{bi}}+g_{M}\right) \left(
\left( \frac{\kappa \left( v\right) }{l^{2}},v\right) ,\left( \frac{\kappa
\left( w\right) }{l^{2}},w\right) \right)  \notag \\
&=&g_{\mathrm{bi}}\left( \kappa \left( v\right) ,\kappa \left( w\right)
\right) +l^{2}g_{M}\left( v,w\right) .  \label{metric}
\end{eqnarray}%
So 
\begin{equation}
\frac{1}{l^{2}}\left( l^{2}Ch_{l}\right) ^{\ast }\left( g_{l}|_{T\left( 
\mathrm{orbits}\right) }\right) =\left( \kappa \right) ^{\ast }\left( g_{%
\mathrm{bi}}\right) +l^{2}\left( g_{M}|_{T\left( \mathrm{orbits}\right)
}\right) .  \label{tensor eqn}
\end{equation}%
From Equation \ref{Ch_l} we have 
\begin{equation*}
l^{2}Ch_{l}=K_{M}\circ \kappa +l^{2}\mathrm{id.}
\end{equation*}%
Combining this with Proposition \ref{C-p bounds prop} we see for small
enough $l$, there is a bundle map 
\begin{equation*}
E:\left. T\left( \mathrm{orbits}\right) \right\vert _{M^{\text{reg}%
}}\longrightarrow \left. T\left( \mathrm{orbits}\right) \right\vert _{M^{%
\text{reg}}}
\end{equation*}%
so that 
\begin{equation}
\left( l^{2}Ch_{l}\right) ^{-1}=\kappa ^{-1}\circ K_{M}^{-1}+O\left(
l^{2}\right) E,  \label{Cheeg inv}
\end{equation}%
and 
\begin{equation}
\left\vert E\right\vert _{C^{p}}\leq 1.  \label{bound on E eqn}
\end{equation}

Combining Equations \ref{tensor eqn} and \ref{Cheeg inv} gives%
\begin{eqnarray*}
\frac{1}{l^{2}}g_{l}|_{T\left( \mathrm{orbits}\right) } &=&\left( \left(
l^{2}Ch_{l}\right) ^{-1}\right) ^{\ast }\left( \kappa \right) ^{\ast }\left(
g_{\mathrm{bi}}\right) +l^{2}\left( \left( l^{2}Ch_{l}\right) ^{-1}\right)
^{\ast }\left( g_{M}|_{T\left( \mathrm{orbits}\right) }\right) \\
&=&\left( K_{M}^{-1}\right) ^{\ast }\left( g_{\mathrm{bi}}\right) +O\left(
l^{2}\right) \left( E\right) ^{\ast }\left( \kappa \right) ^{\ast }\left( g_{%
\mathrm{bi}}\right) + \\
&&+\,l^{2}\left( \kappa ^{-1}\circ K_{M}^{-1}\right) ^{\ast }\left(
g_{M}|_{T\left( \mathrm{orbits}\right) }\right) +\,O\left( l^{4}\right)
\left( E_{1}\right) ^{\ast }\left( g_{M}|_{T\left( \mathrm{orbits}\right)
}\right) \\
&=&\left( K_{M}^{-1}\right) ^{\ast }\left( g_{\mathrm{bi}}\right) +l^{2}%
\mathcal{\tilde{E}},
\end{eqnarray*}%
where 
\begin{equation*}
l^{2}\widetilde{\mathcal{E}}=O\left( l^{2}\right) \left( E\right) ^{\ast
}\left( \kappa \right) ^{\ast }\left( g_{\mathrm{bi}}\right) +\,l^{2}\left(
\kappa ^{-1}\circ K_{M}^{-1}\right) ^{\ast }\left( g_{M}|_{T\left( \mathrm{%
orbits}\right) }\right) +\,O\left( l^{4}\right) \left( E\right) ^{\ast
}\left( g_{M}|_{T\left( \mathrm{orbits}\right) }\right) .
\end{equation*}%
Combining this with Proposition \ref{C-p bounds prop} and Inequality \ref%
{bound on E eqn} it follows that%
\begin{equation*}
\left\vert \widetilde{\mathcal{E}}\right\vert _{C^{p}}\leq C
\end{equation*}%
for some $C>0.$
\end{proof}

\begin{proposition}
\label{diff}Given any compact subset $\mathcal{K}\subset M^{\text{reg}},$
there is an $l_{0}>0$ so that for all $l\in \left( 0,l_{0}\right) $ there is
a $\left( 0,2\right) $--symmetric tensor $\mathcal{E}$ and a constant $C>0$
with the following properties. For all $x\in \mathcal{K}$%
\begin{equation}
\left( \Phi _{x}\right) ^{\ast }\left( \tilde{g}_{l}\right) =g_{\mathrm{nh,x}%
}+l^{2}\mathcal{E}\text{ and}\text{ }  \label{Gap}
\end{equation}%
\begin{equation*}
\left\vert \mathcal{E}\right\vert _{C^{p}}\leq C.
\end{equation*}
\end{proposition}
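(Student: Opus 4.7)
Since $\tilde{g}_l$ is $G$-invariant (Cheeger deformations preserve the $G$-action), $g_{\mathrm{nh},x}$ is $G$-invariant by construction, and $\Phi_x$ is $G$-equivariant, the pullback $(\Phi_x)^*(\tilde{g}_l)$ is also $G$-invariant. Hence it suffices to establish equation (\ref{Gap}) at the single point $eG_x \in G/G_x$; $\mathcal{E}$ can then be defined on the rest of $G/G_x$ by $G$-translation.

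My plan at $eG_x$ is to pull back the identity from Lemma \ref{orbital change} through $D\Phi_x$ and to identify the leading term with $g_{\mathrm{nh},x}$. For $u,v \in T_{eG_x}(G/G_x)$, formula (\ref{DPhi eqn}) gives $(D\Phi_x)_{eG_x}u = K_{M,x}(K_{G/G_x,\,eG_x}|_{\mathfrak{m}_x}^{-1} u) \in T_x G(x)$, so Lemma \ref{orbital change} applies. The leading term becomes
\begin{equation*}
(K_M^{-1})^*(g_{\mathrm{bi}})\bigl((D\Phi_x)u,\,(D\Phi_x)v\bigr) = g_{\mathrm{bi}}\bigl(K_{G/G_x,\,eG_x}|_{\mathfrak{m}_x}^{-1}u,\; K_{G/G_x,\,eG_x}|_{\mathfrak{m}_x}^{-1}v\bigr),
\end{equation*}
and this equals $g_{\mathrm{nh},x}|_{eG_x}(u,v)$, because the submersion $(G,g_{\mathrm{bi}}) \longrightarrow G/G_x$ defining $g_{\mathrm{nh},x}$ has horizontal space $\mathfrak{m}_x$ at $e$ and identifies $T_{eG_x}(G/G_x)$ with $\mathfrak{m}_x$ exactly via $K_{G/G_x,\,eG_x}|_{\mathfrak{m}_x}$.

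The $l^{2}$-error at $eG_x$ is $(D\Phi_x)^{*}\widetilde{\mathcal{E}}|_{x}$; extending $G$-equivariantly defines $\mathcal{E}$ on $G/G_x$. The uniform bound $|\mathcal{E}|_{C^p} \leq C$ then follows by combining the bound on $\widetilde{\mathcal{E}}$ in Lemma \ref{orbital change} with the bounds on $K_M$ and $K_{G/G_x}|_{\mathfrak{m}_x}^{-1}$ in Proposition \ref{C-p bounds prop}, which together control $D\Phi_x$ and its derivatives through equation (\ref{DPhi eqn}).

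The main obstacle I anticipate is making the uniformity in $x$ rigorous, since $\mathcal{E}$ lives on $G/G_x$ and that space formally varies with $x$. I would address this by using that all points of $M^{\mathrm{reg}}$ have conjugate principal isotropy, so the family $\{G/G_x\}_{x \in \mathcal{K}}$ can be identified with a fixed model on which the analogues of $K_{G/G_x}|_{\mathfrak{m}_x}^{-1}$ vary smoothly in $x$; compactness of $\mathcal{K}$ then reduces the $C^{p}$ estimate to the bundle-map bounds already provided by Proposition \ref{C-p bounds prop} and Lemma \ref{orbital change}.
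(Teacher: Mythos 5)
Your proposal is correct and follows essentially the same route as the paper: reduce to $eG_{x}$ by $G$--invariance, pull back Lemma \ref{orbital change} through $\left( D\Phi _{x}\right) _{eG_{x}}=K_{M,x}\circ \left. K_{G/G_{x},\text{ }eG_{x}}\right\vert _{\mathfrak{m}_{x}}^{-1}$ so that the leading term telescopes to $\left( \left. K_{G/G_{x},\text{ }eG_{x}}\right\vert _{\mathfrak{m}_{x}}^{-1}\right) ^{\ast }\left( g_{\mathrm{bi}}\right) =g_{\mathrm{nh,x}}$, and set $\mathcal{E}=\left( D\Phi _{x}\right) ^{\ast }\mathcal{\tilde{E}}$ with the bound coming from Proposition \ref{C-p bounds prop}. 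Your closing remark about uniformity over the varying quotients $G/G_{x}$ is a sensible precaution that the paper handles implicitly through the same bundle-map bounds.
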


\begin{proof}
Since $\Phi _{x}^{\ast }\left( \tilde{g}_{l}\right) $ and $g_{\mathrm{nh,x}}$
are $G$--invariant, it suffices to verify Equation \ref{Gap} at $eG_{x}.$
Using Equation \ref{DPhi eqn} and the linearity of $K_{M,x}$ and $K_{G/G_{x},%
\text{ }eG_{x}}^{-1},$ we see that applying $\left( \Phi _{x}\right) ^{\ast
} $ to Equation \ref{tildeg on orbit eqn} gives 
\begin{eqnarray*}
\left( \Phi _{x}\right) ^{\ast }\left( \tilde{g}_{l}|_{T_{x}G\left( x\right)
}\right) &=&\left( \Phi _{x}\right) ^{\ast }\left( K_{M,x}|_{\mathfrak{m}%
_{x}}^{-1}\right) ^{\ast }\left( g_{\mathrm{bi}}\right) +l^{2}\left( \Phi
_{x}\right) ^{\ast }\left( \mathcal{\tilde{E}}\right) \\
&=&\left( K_{M,x}\circ K_{G/G_{x},\text{ }eG_{x}}^{-1}\right) ^{\ast }\left(
K_{M,x}|_{\mathfrak{m}_{x}}^{-1}\right) ^{\ast }\left( g_{\mathrm{bi}%
}\right) +l^{2}\left( K_{M,x}\circ K_{G/G_{x},\text{ }eG_{x}}^{-1}\right)
^{\ast }\left( \mathcal{\tilde{E}}\right) \\
&=&\left( K_{M,x}|_{\mathfrak{m}_{x}}^{-1}\circ K_{M,x}\circ K_{G/G_{x},%
\text{ }eG_{x}}^{-1}\right) ^{\ast }\left( g_{\mathrm{bi}}\right)
+l^{2}\left( K_{M,x}\circ K_{G/G_{x},\text{ }eG_{x}}^{-1}\right) ^{\ast
}\left( \mathcal{\tilde{E}}\right) \\
&=&\left( \left. K_{G/G_{x},\text{ }eG_{x}}\right\vert _{\mathfrak{m}%
_{x}}^{-1}\right) ^{\ast }\left( g_{\mathrm{bi}}\right) +l^{2}\left(
K_{M,x}\circ K_{G/G_{x},\text{ }eG_{x}}^{-1}\right) ^{\ast }\left( \mathcal{%
\tilde{E}}\right) \\
&=&g_{\mathrm{nh,x}}+l^{2}\left( K_{M,x}\circ K_{G/G_{x},\text{ }%
eG_{x}}^{-1}\right) ^{\ast }\left( \mathcal{\tilde{E}}\right)
\end{eqnarray*}

The result then follows by setting 
\begin{equation*}
\mathcal{E}_{x}=\left( K_{M,x}\circ K_{G/G_{x},\text{ }eG_{x}}^{-1}\right)
^{\ast }\left( \mathcal{\tilde{E}}_{x}\right)
\end{equation*}%
and by appealing to Proposition \ref{C-p bounds prop} and the fact that $%
\left\vert \mathcal{\tilde{E}}\right\vert _{C^{p}}\leq C.$
\end{proof}

We are now in a position to begin the proofs of Theorems \ref{vert rescale}
and \ref{vert re-scl II thm}. First observe that the distribution orthogonal
to the orbits 
\begin{equation*}
x\mapsto TG\left( x\right) ^{\perp }
\end{equation*}%
is the same for $g_{l},$ $\tilde{g}_{l},$ and $g_{M}.$ Also notice that for $%
Z\in TG\left( x\right) ^{\perp },$ 
\begin{equation}
g_{l}\left( Z,\cdot \right) =\tilde{g}_{l}\left( Z,\cdot \right)
=g_{M}\left( Z,\cdot \right) .  \label{horiz static}
\end{equation}

For $x\in \mathcal{K}\subset M^{\text{reg}}$ we set 
\begin{equation}
\tilde{g}|_{x}\equiv g_{M}|_{TG\left( x\right) ^{\perp }}+\left( \Phi
_{x}^{-1}\right) ^{\ast }\left( g_{\mathrm{nh,x}}\right) .
\label{dfn of
tilde-g}
\end{equation}%
Our next result shows that $\tilde{g}$ is $G$--invariant.

\begin{proposition}
For $y\in G\left( x\right) ,$ $\left( \Phi _{x}^{-1}\right) ^{\ast }\left(
g_{\mathrm{nh,x}}\right) =\left( \Phi _{y}^{-1}\right) ^{\ast }\left( g_{%
\mathrm{nh,y}}\right) $
\end{proposition}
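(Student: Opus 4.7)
The plan is to reduce the claim to the statement that right multiplication on $G$ is a fiber-preserving isometry between the two submersions that define $g_{\mathrm{nh},x}$ and $g_{\mathrm{nh},y}$. Choose $h \in G$ with $y = hx$, so that $G_y = h G_x h^{-1}$, equivalently $G_y h = h G_x$. This last identity makes
\[ R_h : G/G_y \longrightarrow G/G_x, \qquad g G_y \longmapsto g h G_x, \]
a well-defined $G$-equivariant diffeomorphism.

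First I would verify $\Phi_y = \Phi_x \circ R_h$ by direct substitution into the definitions: $\Phi_x(R_h(gG_y)) = \Phi_x(ghG_x) = ghx = gy = \Phi_y(gG_y)$. Pulling back by $\Phi_y^{-1} = R_h^{-1} \circ \Phi_x^{-1}$ then yields
\[ (\Phi_y^{-1})^* g_{\mathrm{nh},y} \;=\; (\Phi_x^{-1})^* \bigl((R_h^{-1})^* g_{\mathrm{nh},y}\bigr), \]
so the proposition reduces to the single identity $R_h^* g_{\mathrm{nh},x} = g_{\mathrm{nh},y}$.

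To close this, I would lift $R_h$ to the right translation $\tilde{R}_h : G \to G$, $g \mapsto gh$. Bi-invariance of $g_{\mathrm{bi}}$ makes $\tilde R_h$ an isometry of $(G, g_{\mathrm{bi}})$. From $\pi_x \circ \tilde R_h = R_h \circ \pi_y$ it covers $R_h$, and from $G_y h = h G_x$ it sends the fiber $gG_y$ of $\pi_y : G \to G/G_y$ to the fiber $ghG_x$ of $\pi_x : G \to G/G_x$. Thus $\tilde R_h$ is a fiber-preserving isometry between the two Riemannian submersions defining the normal homogeneous metrics, so by the very definition of the quotient metric it descends to an isometry $R_h : (G/G_y, g_{\mathrm{nh},y}) \to (G/G_x, g_{\mathrm{nh},x})$, which is the required identity.

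The main obstacle is really only bookkeeping: one must keep the $G_x$- and $G_y$-right-actions on $G$ straight and use the conjugation relation $G_y = h G_x h^{-1}$ carefully, so that right translation by $h$ genuinely carries one submersion structure onto the other. Once this compatibility is in place, bi-invariance of $g_{\mathrm{bi}}$ does all the analytic work with no further input.
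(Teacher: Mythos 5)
Your proof is correct, and it rests on the same underlying mechanism as the paper's --- bi-invariance of $g_{\mathrm{bi}}$ turns a self-map of $G$ into a fiber-preserving isometry between the two submersions $\pi_{G_x}:G\to G/G_x$ and $\pi_{G_y}:G\to G/G_y$, which therefore descends to an isometry of the normal homogeneous metrics --- but the factorization is genuinely different and slightly leaner. The paper relates $\Phi_x$ to $\Phi_y$ through the conjugation $\mathrm{C}_{g_{yx}}(a)=g_{yx}ag_{yx}^{-1}$, which induces $\mathrm{\bar{C}}_{g_{yx}}:G/G_x\to G/G_y$ and leaves a residual left translation $L_{g_{yx}}$ on the orbit; the argument therefore needs two isometry facts, namely that $\mathrm{\bar{C}}_{g_{yx}}$ carries $g_{\mathrm{nh,x}}$ to $g_{\mathrm{nh,y}}$ and that $L_{g_{yx}}$ is an isometry of $\left(G(y),\left(\Phi_y^{-1}\right)^{\ast}\left(g_{\mathrm{nh,y}}\right)\right)$, the latter coming from $G$--invariance of the homogeneous metric. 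Your right translation $\tilde{R}_h:g\mapsto gh$ instead satisfies $\Phi_y=\Phi_x\circ R_h$ exactly, with no leftover translation on the orbit, so the whole proposition collapses to the single identity $R_h^{\ast}g_{\mathrm{nh,x}}=g_{\mathrm{nh,y}}$ and you never need to invoke $G$--invariance of the induced metric. (Indeed your version only uses right-invariance of $g_{\mathrm{bi}}$ at the one place where an isometry of $G$ is needed, whereas the conjugation route uses bi-invariance there; of course bi-invariance is assumed throughout, so this is an aesthetic rather than a substantive gain.) The bookkeeping you flag --- well-definedness of $R_h$ from $G_yh=hG_x$ and the fiber computation $gG_yh=ghG_x$ --- is exactly the right thing to check and you check it correctly.
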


\begin{proof}
Let $g_{yx}\in G$ satisfy $g_{yx}x=y.$ Then $g_{yx}G_{x}g_{yx}^{-1}=G_{y}$
and we have a commutative diagram%
\begin{equation*}
\begin{xy}
%vertices
(0,40)*+{G}="v1";
(30,40)*+{G}="v2";%
(0,20)*+{G/G_x}="v3";
(30,20)*+{G/G_y}="v4";
(0,0)*+{G(x)}="v5";%
(30,0)*+{G(y)}="v6";
%arrows
{\ar "v1"; "v2"}?*!/_3mm/{\mathrm{C}_{g_{yx}}}; 
{\ar "v1"; "v3"}?*!/^4mm/{\pi _{G_{x}}};%
{\ar "v2"; "v4"}?*!/_4mm/{\pi _{G_{y}}};
{\ar "v3"; "v4"}?*!/_3mm/{\mathrm{\bar{C}}_{g_{yx}}}; 
{\ar "v3"; "v5"}?*!/^4mm/{\Phi_x};%
{\ar "v4"; "v6"}?*!/_4mm/{\Phi_y};
{\ar "v5"; "v6"}?*!/_3mm/{L_{g_{yx}}};
\end{xy}
\end{equation*}

where 
\begin{eqnarray*}
\mathrm{C}_{g_{yx}}\left( a\right) &=&g_{yx}ag_{yx}^{-1}, \\
\mathrm{\bar{C}}_{g_{yx}}\left( aG_{x}\right) &=&g_{yx}ag_{yx}^{-1}G_{y}, \\
L_{g_{yx}}\left( p\right) &=&g_{yx}p,
\end{eqnarray*}
and $\pi _{G_{x}}$ and $\pi _{G_{y}}$ are the quotient maps.

It follows that 
\begin{eqnarray*}
\left( \Phi _{x}^{-1}\right) ^{\ast }\left( g_{\mathrm{nh,x}}\right)
&=&\left( \left( \mathrm{\bar{C}}_{g_{yx}}\right) ^{-1}\circ \Phi
_{y}^{-1}\circ L_{g_{yx}}\right) ^{\ast }\left( g_{\mathrm{nh,x}}\right) \\
&=&\left( L_{g_{yx}}\right) ^{\ast }\circ \left( \Phi _{y}^{-1}\right)
^{\ast }\circ \left( \mathrm{\bar{C}}_{g_{yx}}^{-1}\right) ^{\ast }\left( g_{%
\mathrm{nh,x}}\right) \\
&=&\left( L_{g_{yx}}\right) ^{\ast }\circ \left( \Phi _{y}^{-1}\right)
^{\ast }\left( g_{\mathrm{nh,y}}\right) \\
&=&\left( \Phi _{y}^{-1}\right) ^{\ast }\left( g_{\mathrm{nh,y}}\right) ,
\end{eqnarray*}%
since $L_{g_{yx}}$ is an isometry of $\left( G\left( y\right) ,\left( \Phi
_{y}^{-1}\right) ^{\ast }\left( g_{\mathrm{nh,y}}\right) \right) .$
\end{proof}

Applying $\left( \Phi _{x}^{-1}\right) ^{\ast }$ to both sides of Equation %
\ref{Gap}, we obtain 
\begin{equation}
\text{ }\tilde{g}_{l}|_{TG\left( x\right) }=\left( \Phi _{x}^{-1}\right)
^{\ast }\left( g_{\mathrm{nh,x}}\right) +l^{2}\left( \Phi _{x}^{-1}\right)
^{\ast }\left( \mathcal{E}\right) .  \label{vert conv}
\end{equation}%
\bigskip

Combining Equations \ref{DPhi eqn}, \ref{horiz static} and \ref{vert conv}
with the inequality, $\left\vert \mathcal{E}\right\vert _{C^{p}}\leq C,$ we
see that 
\begin{equation}
\left\vert \tilde{g}_{l}-\tilde{g}\right\vert _{C^{p}}\leq Cl^{2}.
\label{C-p
close inequal}
\end{equation}

\begin{remark}
Our proof does not preclude the possibility that the bounds on the higher
order derivatives of $\mathcal{E}$ depend on the order $p,$ and so does not
give convergence in the $C^{\infty }$--topology.
\end{remark}
The next result shows that the fibers of $\pi ^{\text{reg}}:\left( \mathcal{U%
},\tilde{g}\right) \longrightarrow \mathcal{U}/G$ are totally geodesic and,
combined with Inequality \ref{C-p close inequal}, completes the proofs of
Theorems \ref{vert rescale} and \ref{vert re-scl II thm}.
\begin{proposition}
Let $T^{g_{M}}$ and $T^{\tilde{g}_{l}}$ be the $T$--tensors of the
Riemannian submersions 
\begin{eqnarray*}
\pi ^{\text{reg}} &:&\left( M^{\text{reg}},g_{M}\right) \longrightarrow M^{%
\text{reg}}/G,\text{ and} \\
\pi ^{\text{reg}} &:&\left( M^{\text{reg}},\tilde{g}_{l}\right)
\longrightarrow M^{\text{reg}}/G,
\end{eqnarray*}%
as defined in \cite{O'Neill}.
Given any compact subset $\mathcal{K}\subset M^{\text{reg }}$there is a
constant $C>0$ so that on $\mathcal{K}$ 
\begin{equation}
\left\vert T^{\tilde{g}_{l}}\right\vert \leq Cl^{2}\left\vert
T^{g_{M}}\right\vert .  \label{T}
\end{equation}
\end{proposition}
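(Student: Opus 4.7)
My plan is to combine the $G$-invariance of the metrics with an explicit closed form for $\tilde{g}_l$ restricted to orbit tangent spaces. Let $V = K_M(k)$, $W = K_M(h)$ be Killing fields with $k$, $h \in \mathfrak{m}_x$ and let $X$ be a horizontal vector at $x \in \mathcal{K}$. Because $\tilde{g}_l$ and $g_M$ are both $G$-invariant, the flow of $V$ consists of isometries and preserves the horizontal distribution, so $[X, V]$ is horizontal (and similarly $[X, W]$). The Koszul formula then collapses to
\[
2g(T^g_V W, X) = -X\, g(V, W), \qquad g \in \{\tilde{g}_l, g_M\}.
\]
Combined with Equation \ref{horiz static}, which gives $g_M(T^{\tilde{g}_l}_V W, X) = \tilde{g}_l(T^{\tilde{g}_l}_V W, X)$, the desired estimate reduces to the pointwise bound $|X\tilde{g}_l(V, W)| \le C\,l^2\, |X g_M(V, W)|$ for all such $V$, $W$, $X$.

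Next, I would compute $\tilde{g}_l$ on vertical vectors directly from the horizontal lift $\hat{v}_l = (\kappa_x(v)/l^2, v)$ for the Cheeger submersion $q$. After using the algebraic identity $P_x(P_x + l^2 I)^{-1} = I - l^2(P_x + l^2 I)^{-1}$, the result is
\[
\tilde{g}_l(V, W)\big|_x = g_{\mathrm{bi}}(k_{\mathfrak{m}}, h_{\mathfrak{m}}) - l^2\, g_{\mathrm{bi}}\bigl((P_x + l^2 I)^{-1} k_{\mathfrak{m}}, h_{\mathfrak{m}}\bigr),
\]
where $P_x = \kappa_x \circ K_{M,x}|_{\mathfrak{m}_x}$ is the positive definite self-adjoint operator on $\mathfrak{m}_x$ characterized by $g_{\mathrm{bi}}(P_x k', h') = g_M(K_{M,x}(k'), K_{M,x}(h'))$, and $k_{\mathfrak{m}}$ denotes the $g_{\mathrm{bi}}$-orthogonal projection to $\mathfrak{m}_x$. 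Differentiating along a horizontal curve $\gamma$ with $\gamma(0) = x$, $\gamma'(0) = X$: the first term contributes zero at $t = 0$ because $\frac{d}{dt}(k_{\mathfrak{m}_{\gamma(t)}})\big|_{t=0}$ lies in $\mathfrak{g}_x$, which is $g_{\mathrm{bi}}$-orthogonal to $\mathfrak{m}_x$ (this is the infinitesimal expression of $T^{\tilde{g}} = 0$); the second, using $((P_x + l^2 I)^{-1})' = -(P_x + l^2 I)^{-1} \dot{P}_X (P_x + l^2 I)^{-1}$, yields
\[
X\tilde{g}_l(V, W) = l^2\, g_{\mathrm{bi}}\bigl((P_x + l^2 I)^{-1} \dot{P}_X (P_x + l^2 I)^{-1} k_{\mathfrak{m}}, h_{\mathfrak{m}}\bigr),
\]
where $\dot{P}_X = \frac{d}{dt}(P_{\gamma(t)})\big|_{t=0}$.

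Since $g_{\mathrm{bi}}(\dot{P}_X k', h') = X g_M(K_M(k'), K_M(h'))$ for $k'$, $h' \in \mathfrak{m}_x$, this last display exhibits $(V, W) \mapsto X\tilde{g}_l(V, W)$ as $l^2$ times the bilinear form $(V, W) \mapsto X g_M(V, W)$, with arguments pre- and post-composed by the bundle endomorphism $K_M (P_x + l^2 I)^{-1} K_M^{-1}$ of $T(\mathrm{orbits})$. Since $P_x$ is positive definite with eigenvalues bounded below uniformly on $\mathcal{K}$ (Proposition \ref{C-p bounds prop}), this endomorphism is uniformly bounded in $l$, and the pointwise inequality $|T^{\tilde{g}_l}| \le C\, l^2\, |T^{g_M}|$ follows. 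The main obstacle is the closed-form derivation above, together with the verification that the $l^0$-term has vanishing $X$-derivative---both rely on the fact that along horizontal directions the isotropy subalgebra rotates within $\mathfrak{g}$ in such a way that the first-order variation of $k_{\mathfrak{m}_{\gamma(t)}}$ remains in $\mathfrak{g}_x$.
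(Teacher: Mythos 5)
Your proof is correct, but it follows a genuinely different route from the paper's. The paper deliberately avoids any closed form for the Cheeger metric: it lifts $Ch_{l}(V)$, $Ch_{l}(W)$, $Ch_{l}(Z)$ to $\left( G\times M,\,l^{2}g_{\mathrm{bi}}+g_{M}\right) $, observes that $g_{l}\left( T_{Ch_{l}(V)}^{g_{l}}Ch_{l}(W),Ch_{l}(Z)\right) =g_{M}\left( T_{V}^{g_{M}}W,Z\right) $ exactly while $\vert Ch_{l}(V)\vert ^{2}=\vert \kappa (V)\vert _{g_{\mathrm{bi}}}^{2}/l^{2}+1$ blows up like $l^{-2}$, so that normalizing the arguments produces the factor $l^{2}$; a separate Koszul computation then gives $\vert T^{\tilde{g}_{l}}\vert =\vert T^{g_{l}}\vert $. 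You instead derive the exact formula $\tilde{g}_{l}\left( K_{M}(k),K_{M}(h)\right) =g_{\mathrm{bi}}\left( P_{x}(P_{x}+l^{2}I)^{-1}k_{\mathfrak{m}},h_{\mathfrak{m}}\right) $ --- which is correct, as is the identification $g_{\mathrm{bi}}(\dot{P}_{X}k^{\prime },h^{\prime })=Xg_{M}(K(k^{\prime }),K(h^{\prime }))$ --- and differentiate it horizontally after collapsing the Koszul formula to $2g(T_{V}W,X)=-Xg(V,W)$. Your route buys an exact identity ($T^{\tilde{g}_{l}}$ equals $l^{2}$ times $T^{g_{M}}$ with arguments conjugated by the uniformly bounded endomorphism $K_{M}(P_{x}+l^{2}I)^{-1}K_{M}^{-1}$) and, as a byproduct, a direct proof that $T^{\tilde{g}}=0$, since the $l^{0}$--term has vanishing horizontal derivative; the paper obtains that only by letting $l\rightarrow 0$ in Inequality \ref{T}. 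The paper's route buys brevity and independence from the explicit formula for the Cheeger metric, which the authors state as an explicit goal. Two points you should make explicit: first, when differentiating the $l^{2}$--term the derivative also falls on $k_{\mathfrak{m}_{\gamma (t)}}$ and $h_{\mathfrak{m}_{\gamma (t)}}$, and these contributions vanish by the same orthogonality you invoke for the $l^{0}$--term, provided you take $k,h\in \mathfrak{m}_{x}$ and note that $(P_{x}+l^{2}I)^{-1}$, extended as $l^{-2}I$ on $\mathfrak{g}_{x}$, preserves the splitting $\mathfrak{g}_{x}\oplus \mathfrak{m}_{x}$; second, $\vert T^{\tilde{g}_{l}}\vert $ is computed on $\tilde{g}_{l}$--unit vertical vectors, so you need $\tilde{g}_{l}$ and $g_{M}$ uniformly comparable on $\left. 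T(\mathrm{orbits})\right\vert _{\mathcal{K}}$ for small $l$, which follows from your closed form together with Proposition \ref{C-p bounds prop} but is a needed step.
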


\begin{proof}
Let $T^{g_{l}}$ be the $T$--tensor of the Riemannian submersion 
\begin{equation*}
\pi ^{\text{reg}}:\left( M^{\text{reg}},g_{l}\right) \longrightarrow M^{%
\text{reg}}/G.
\end{equation*}%
The duality between the shape operator and the second fundamental form of
the fibers implies that the norm of the $T$--tensor is determined by its
values on just the vertical vectors.

We begin by proving Inequality \ref{T} with $T^{\tilde{g}_{l}}$ replaced by $%
T^{g_{l}}$ and then we will show that $\left\vert T^{\tilde{g}%
_{l}}\right\vert =\left\vert T^{g_{l}}\right\vert .$

For $V,W\in TG\left( x\right) $ and $Z\in TG\left( x\right) ^{\perp },$ we
lift $Ch_{l}\left( V\right) ,$ $Ch_{l}\left( W\right) ,$ and $Ch_{l}\left(
Z\right) $ to $G\times M$ and get 
\begin{eqnarray*}
g_{l}\left( T_{Ch_{l}\left( V\right) }Ch_{l}\left( W\right) ,Ch_{l}\left(
Z\right) \right) &=&\left( l^{2}g_{\mathrm{bi}}+g_{M}\right) \left( \nabla
_{\left( \frac{\kappa \left( V\right) }{l^{2}},V\right) }^{l^{2}g_{\mathrm{bi%
}}+g_{M}}\left( \frac{\kappa \left( W\right) }{l^{2}},W\right) ,\left(
0,Z\right) \right) \\
&=&g_{M}\left( \nabla _{V}^{g_{M}}W,Z\right) \\
&=&g_{M}\left( T_{V}^{g_{M}}W,Z\right)
\end{eqnarray*}%
On the other hand if $\left\vert V\right\vert _{g_{M}}=\left\vert
W\right\vert _{g_{M}}=1,$ then 
\begin{equation*}
\left\vert Ch_{l}\left( V\right) \right\vert ^{2}=\frac{\left\vert \kappa
\left( V\right) \right\vert _{g_{\mathrm{bi}}}^{2}}{l^{2}}+1\text{ and }%
\left\vert Ch_{l}\left( W\right) \right\vert ^{2}=\frac{\left\vert \kappa
\left( W\right) \right\vert _{g_{\mathrm{bi}}}^{2}}{l^{2}}+1.
\end{equation*}%
Combining the previous two displays with Proposition \ref{C-p bounds prop}
we see that given any compact subset $\mathcal{K}\subset M^{\text{reg }}$%
there is a constant $C>0$ so that 
\begin{equation*}
\left\vert T^{g_{l}}\right\vert \leq Cl^{2}\left\vert T^{g_{M}}\right\vert .
\end{equation*}%
To see $\left\vert T^{\tilde{g}_{l}}\right\vert =\left\vert
T^{g_{l}}\right\vert $ we use the Koszul formula and find that%
\begin{eqnarray*}
2\tilde{g}_{l}\left( T_{lV}^{\tilde{g}_{l}}lW,Z\right) &=&2l^{2}\tilde{g}%
_{l}\left( \tilde{\nabla}_{V}W,Z\right) \\
&=&l^{2}\left( -D_{Z}\tilde{g}_{l}\left( V,W\right) +\tilde{g}_{l}\left( %
\left[ Z,V\right] ,W\right) +\tilde{g}_{l}\left( \left[ Z,W\right] ,V\right)
\right) \\
&=&-D_{Z}g_{l}\left( V,W\right) +g_{l}\left( \left[ Z,V\right] ,W\right)
+g_{l}\left( \left[ Z,W\right] ,V\right) \\
&=&2g_{l}\left( T_{V}^{g_{l}}W,Z\right) .
\end{eqnarray*}%
So $\left\vert T^{\tilde{g}_{l}}\right\vert =\left\vert T^{g_{l}}\right\vert
,$ and the result follows.
\end{proof}

http://www.math.upenn.edu/\symbol{126}wziller/papers/SummaryMueter.pdf


\begin{thebibliography}{9}
\bibitem{Cheeg} J. Cheeger, \emph{Some examples of manifolds of non-negative
curvature}. J. Differential Geometry \textbf{8} (1973), 623--628.

\bibitem{Gray} A. Gray, \emph{Pseudo-Riemannian\ almost product manifolds
and submersions, }J. Math. Mech. \textbf{16} (1967), 413--443.

\bibitem{Hir} M. Hirsch, \emph{Differential Topology, }Graduate Texts in
Mathematics, Springer-Verlag, 1994.

\bibitem{Muet} P. M\"{u}ter, \emph{Kr\"{u}mmungserh\"{o}hende Deformationen
mittels Gruppenaktionen}, Ph.D. thesis, University of M\"{u}nster, 1987.

\bibitem{O'Neill} B. O'Neill, \emph{The fundamental equations of a submersion%
}, Michigan Math. J. \textbf{13} (1966), 459--469.

\bibitem{PetWilh1} P. Petersen and F. Wilhelm, \emph{Examples of Riemannian
manifolds with positive curvature almost everywhere}. Geom. Topol. \textbf{3}
(1999), 331--367.

\bibitem{SearleWilh} C. Searle and F. Wilhelm \emph{How to lift positive
Ricci curvature}, Geometry and Topology, to appear.

\bibitem{SchTapp} L. Schwachh\"{o}fer \& K. Tapp, \emph{Homogeneous metrics
with nonnegative curvature}, J. of Geom. Anal., Vol. 19, No. 4 (2009),
929-943.

\bibitem{Zil} W. Ziller, \emph{On P. M\"{u}ter's Ph.D. Thesis, }preprint.
\end{thebibliography}
\end{document}